\let\cal=\mathcal      
\def\mcc{M\raise.5ex\hbox{c}C}
\def\mccarthy{M\raise.5ex\hbox{c}Carthy}
\def\eg{{\it e.g. }}
\def\ie{{\it i.e. }}
\def\h{{\cal H}}
\def\K{{\cal K}}
\def\M{{\cal M}}
\def\N{{\cal N}}
\let\i=\infty
\def\={\ = \ }    
\def\ot{\otimes}
\def\C{\mathbb C}
\def\R{\mathbb R}
\def\dis{\displaystyle}
\def\be{\setcounter{equation}{\value{theorem}} \begin{equation}}
\def\ee{\end{equation} \addtocounter{theorem}{1}}
\def\beq{\begin{eqnarray*}}
\def\eeq{\end{eqnarray*}}
\def\att{\addtocounter{theorem}{1}}
\def\vs{\vskip 5pt}
\def\bs{\vskip 12pt}
\def\exam{\bs \att {{\bf Example \thetheorem \ }} }
\def\ep{{}{\hfill $\Box$} \vskip 5pt \par}
\def\bl{\begin{lemma}}
\def\el{\end{lemma}}
\def\bt{\begin{theorem}}
\def\et{\end{theorem}}
\def\bprop{\begin{prop}}
\def\eprop{\end{prop}}
\def\bd{\begin{definition}}
\def\ed{\end{definition}}
\def\br{\begin{remark}}
\def\er{\end{remark}}
\def\bexer{\begin{exercise}}
\def\eexer{\end{exercise}}
\newtheorem{theorem}{Theorem}[section]
\newtheorem{prop}[theorem]{Proposition}
\newtheorem{lemma}[theorem]{Lemma}
\newtheorem{definition}[theorem]{Definition}
\renewcommand\O{\Omega}
\def\gdel{G_\delta}
\def\set#1#2{\{ #1 \, | \, #2\}}
\def\norm#1{\| #1 \|}
\def\L{{\mathcal L}}
\def\id{{\rm id}}
\def\M{{\mathbb M}}
\def\bh{{B(\h)}}
\def\bhd{\bh^d}
\def\U{\mathcal U}
\renewcommand\N{{\mathbb N}}
\def\mn{\M_n}
\def\mnd{\mn^d}
\def\mmd{{\mathbb M}_m^d}
\def\md{{\mathbb M}^{[d]}}
\def\gds{G_\delta^\sharp}
\def\d{\delta}
\def\norm#1{\| #1 \|}
\def\pd{{\mathbb P}^d}
\def\idcn{{\rm id}_{\mathbb{C}^n}}
\def\gds{G_\delta^\sharp}
\def\lhd{{\mathcal L}(\h)^d}
\def\lh{{\mathcal L}(\h)}
\def\lho{{\mathcal L}_1(\h)}
\numberwithin{equation}{section}
\title{Aspects of Non-commutative Function Theory
}
\author{
Jim Agler
\thanks{Partially supported by National Science Foundation Grant
DMS 1361720}\\
U.C. San Diego\\ 
La Jolla, CA 92093
\and
John E. M\raise.5ex\hbox{c}Carthy
\thanks{Partially supported by National Science Foundation Grant  
DMS 1300280
}
\\ 
Washington University\\
 St. Louis, MO 63130
}
\begin{document}

\bibliographystyle{plain}
\maketitle

\begin{abstract}
We discuss non commutative functions, which naturally arise when
dealing with functions of more than one matrix variable.
\end{abstract}

\section{Motivation}

A non-commutative polynomial  is an element of the algebra over
a free monoid;  an example is 
\be
\label{eqa1}
p(x,y) \= 2x^2 + 3 xy - 4 yx + 5 x^2y + 6 xyx.
\ee
Non-commutative function theory is the study of functions of non-commuting variables, which may be more
general than non-commutative polynomials. It is based on the observation that matrices are natural objects on which to evaluate an expression like \eqref{eqa1}. 

\subsection{LMI's}

A linear matrix inequality (LMI) is an inequality of the form
\be
\label{eqa2}
A_0 + \sum_{i=1}^M x_i A_i \geq  0.
\ee
The $A_i$ are given self-adjoint $n$-by-$n$ matrices, and the object is to find $x \in \R^m$ such that
\eqref{eqa2} is satisfied (or to show that it is never satisfied). LMI's are very important in control theory, and
there are efficient algorithms for finding solutions. See for example the book \cite{befb94}.

Often, the stability of a system is equivalent to whether a certain matrix valued function $F(x)$ is positive semi-definite;
but the function $F$ may be non-linear. A big question is when the inequality
\[
F(x) \ \geq \ 0 \]
can be reduced to an LMI. This has been studied, for example, in \cite{hmpv09, hm12, hkm12, hkm12b}.
Let us consider a simple example, the Riccati matrix inequality:
\be
\label{eqa3}
AX + XA^* - X BB^*X + C^*C \ >  \ 0 .
\ee
All the matrices in \eqref{eqa3} are real, and $A, B, C$ are known. The self-adjoint matrix $X$ is unknown. 
The quadratic inequality \eqref{eqa3} can be transformed into the following LMI:
\be
\label{eqa4}
\begin{pmatrix}
AX + XA^* + C^* C & XB \\
B^* X & I 
\end{pmatrix}
\ > \ 0 .
\ee
Suppose each matrix $A,B,C,X$ is $3$-by-$3$. Then, in terms of the matrix entries of $X$, \eqref{eqa4} is an LMI in 
6 variables. If one uses Sylvester's criterion to determine positive definiteness, one gets that \eqref{eqa4} is equivalent to
the positivity of 6 polynomials, of degrees 1 through 6, in these 6 variables.
This could then become a question in real algebraic geometry.

However, this approach has two obvious flaws: it loses the matrix structure, and the complexity increases rapidly with the
dimension. Can one study inequalities like \eqref{eqa4} in a dimension independent way?

\subsection{Non-commutative sums of squares}

Hilbert's seventeenth problem asked whether a non-negative polynomial in real variables could be written 
as a sum of squares of rational functions (Hilbert knew that a sum of squares of polynomials did not in general suffice).
Although the answer was proved to be yes by E. Artin in 1927, such problems are still an active area of research in real algebraic geometry today, and operator theory and functional analysis have played a r\^ole in this field \cite{sch91,put93}.

Let $\M_n$ denote the $n$-by-$n$ matrices.
J.W. Helton asked whether a non-commutative polynomial $p$  in the $2d$ variables
$x_1, \dots, x_d, x_1^*, \dots, x_d^*$ that is formally self-adjoint, and has the property that
\[
p(x_1, \dots,  x_d, x_1^*, \dots, x_d^*) \geq 0 \quad \forall\ n,\ \forall \ x_1, \dots , x_d \in \M_n 
\]
must have a representation
\[
p(x) \= \sum_{j} q_j(x_1, \dots , x_d)  q_j(x_1, \dots , x_d)^*
\]
as a (finite) sum of  squares, where each $q_j$ is a non-commutative polynomial in $d$ variables.
He and S. McCullough proved \cite{helt02,helmccu04} that the answer is yes.

This illustrates an important theme: if something is true at all matrix levels (a strong assumption),
it is true for obvious algebraic reasons (a strong conclusion), and the proof is often easier than in the scalar case.

\subsection{Implicit Function Theorem}

Consider the matrix equation \be
\label{eqa5}
X^3 + 2 X^2 Y + 3 Y X + 4 X  + 5 Y + 6 \= 0.
\ee
If $X$ is similar to a diagonal matrix, it is obvious that one can find solutions $Y$ to \eqref{eqa5}
that commute with it. What is perhaps surprising is that generically, this is all that happens:

\bprop
\label{pra2}
For a generic choice of $X \in \mn$, the only $Y$'s that satisfy \eqref{eqa5}
commute with $X$.
\eprop

To see why this is true (and what we mean by generic) let $p (X,Y) = X^3 + 2 X^2 Y + 3 Y X + 4 X  + 5 Y + 6$,
and consider the partial derivative of $p$ w.r.t. $Y$ in the direction $H$. This is
\beq
\frac{\partial}{\partial Y} p(X,Y) [ H] &\=&
\lim_{ t \to 0} \frac{1}{t} \left[ p(X, Y+tH) - p(X,Y) \right] \\
&=& 2 X^2 H + 3 HX + 5 H .
\eeq
An implicit function theorem would say that we can write $Y$ locally as a function of $X$
(and therefore as a matrix that commutes with $X$)
provided $\dis \frac{\partial}{\partial Y} p(X,Y)$ is full rank.
Ontologically, the partial derivative is a linear map from $\mn$ to $\mn$. 
It is therefore full rank if and only if it has 
no kernel. We can analyze this case using the following theorem of Sylvester from 1884 \cite{syl}:

\bt 
Let $A, B$ be in $\mn$.
There is a non-zero matrix $H$ satisfying the equation $AH - H B = 0$ if and only if
$\sigma(A) \cap \sigma(B)$ is non-empty.
\et
So $\dis \frac{\partial}{\partial Y} p(X,Y)$ is full rank if and only if 
\be
\label{eqa6}
\sigma(2 X^2 ) \cap \sigma(-3X - 5I) \ = \ \emptyset .
\ee
Condition \eqref{eqa6} in turn is generically true, so for such $X$ Proposition~\ref{pra2}
will follow from a non-commutative implicit function, such as Theorem \ref{thmc1} below.

\subsection{Functional Calculus}

If $T$ is a single operator on a Banach space $E$, the functional calculus is an extension of the evaluation map
\beq
\pi : \C [z] & \ \to & \L(E) \\
p  &\mapsto & p(T) 
\eeq
to a larger algebra than the polynomials. The Riesz-Dunford functional calculus gives the unique extension
to the algebra of all functions that are holomorphic on a neighborhood of $\sigma (T)$.
If now $T = (T^1, \dots, T^d)$ is a commuting $d$-tuple of operators, then the Taylor functional calculus
extends the evaluation map from polynomials (in $d$ commuting variables) to  functions holomorphic on a neighborhood
of the Taylor spectrum of $T$ \cite{tay70a, tay70b}. 

What about the case that $T$ is a $d$-tuple of non-commuting operators on $X$? Two distinct questions arise.

(i) What should play the r\^ole of holomorphic functions in the non-commutative setting?

(ii) How can they be applied to elements of $\L(E)^d$?

The answers to both questions have their roots in work of Taylor shortly after he completed his work on commuting 
operators \cite{tay72,tay73}. The answer to the first question is nc-functions, which we shall discuss in Section~\ref{secb},
and is the subject of the recent book \cite{kvv14} by D. Kaliuzhnyi-Verbovetskyi and V. Vinnikov.
The second question is discussed in Section~\ref{secfree}.

\subsection{Other motivations}

There has been an upwelling of interest in non-commutative function theory recently. In addition
to the motivations above, let us mention the work of
 Voiculescu \cite{voi4}, in the context of free probability;
Popescu \cite{po06,po08,po10,po11}, in the context of extending classical function theory to 
$d$-tuples of bounded operators;
 Ball, Groenewald and Malakorn \cite{bgm06}, in the context of extending realization formulas
from functions of commuting operators to functions of non-commuting operators;
Alpay and Kalyuzhnyi-Verbovetzkii \cite{akv06} in the context of
realization formulas for rational functions that are $J$-unitary on the boundary of the domain; and
Helton, Klep and McCullough \cite{hkm11a,hkm11b}
and Helton and McCullough \cite{hm12} in the context of developing a descriptive theory of the domains on
which LMI and semi-definite programming apply;  Muhly and Solel \cite{ms13}, in the context of tensorial function theory;
Cimpric,  Helton,  McCullough and Nelson \cite{chmn13} in the context of non-commutative real
Nullstellens\"atze;  the second author and  Timoney \cite{mt15} and
of Helton, Klep, McCullough and Slinglend,
in \cite{hkms09} on non commutative automorphisms; and 
the work of Pascoe and Tully-Doyle \cite{ptd13} on non-commutative operator monotonicity.

The book  \cite{kvv14} by D. Kaliuzhnyi-Verbovetskyi and V. Vinnikov is a much more
comprehensive treatment of the subject than we can give here, and also contains extensive historical references.

\section{NC-functions}
\label{secb}

We want to evaluate functions on $d$-tuples of matrices, where all the matrices in a given $d$-tuple are the same
dimension, but we want to allow this dimension to vary. So our universe becomes
\[
\md \= \cup_{n=1}^\i \mnd .
\]
We shall call a function $f$  defined on a subset of $\md$ {\em graded} if, whenever 
$x \in \mnd$, then $f(x) \in \mn$. If $x \in \mnd$ and $y \in \mmd$, we shall let $x \oplus y$ denote
the $d$-tuple in $\M_{n+m}$

\bd An nc-function $f$ on a set $\Omega \subseteq \md$ is a graded function that respects direct sums and joint similiarities, \ie 
\beq
f(x \oplus y) &\= &  f(x) \oplus f(y)\\
f(s^{-1} x s) := f (s^{-1} x^1 s, \dots, s^{-1} x^d s) &= & s^{-1}f(x) s, 
\eeq
where the equations are only required to hold when the arguments on both sides are in $\Omega$.
\ed

Notice that every non-commutative polynomial is an nc-function on $\md$. To get utility from the definition, one
needs $\Omega$ to have some structure.

\bd
A   set $\Omega \subseteq \md$ is an nc-set if
\newline
(i) If $x,y \in \Omega$ then $x \oplus y \in \Omega$.
\newline
(ii) For every $n \in \N$, $\Omega \cap \mnd$ is open in $\mnd$.
\newline
(iii) If $x \in \Omega \cap \mnd$ and $u$ is an $n$-by-$n$ unitary, then
$u^* x u \in \Omega$.
\ed

The algebraic properties of being nc on an nc-set have analytic consequences. For example,
J.W. Helton, I. Klep and S. McCullough in  \cite{hkm11b} proved that continuity implies analyticity (continuity can actually 
be weakened to local boundedness). 
\bt
\label{thmhkm}
Let $\O$ be an nc-set, let $f$ be an nc-function on $\O$, and assume $f$ is locally bounded on $\O \cap \mnd$ for every $n$. 
Then $f$ is an analytic function of the entries of the matrices at each level $n$.
\et
\begin{lemma}\label{lem2.20}(Lemma 2.6 in \cite{hkm11b}). 
Let $\O$ be an nc-domain in $\md$,  and let $f$ be an  nc-function on $\O$. 
Fix $n \ge 1$ and $v \in \M_n$. If $x,y \in \O \cap \mn^d$ and
\[
\begin{bmatrix}y&yv-vx\\ 0&x\end{bmatrix} \in \O \cap \M_{2n}^d,
\]
then
\be\label{2.160}
f(\begin{bmatrix}y&yv-vx\\ 0&x\end{bmatrix}) = \begin{bmatrix}f(y)&f(y)v-vf(x)\\ 0&f(x)\end{bmatrix}.
\ee
\end{lemma}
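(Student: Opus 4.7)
The plan is to exhibit the block upper-triangular matrix in the hypothesis as a joint similarity of the block-diagonal matrix $y \oplus x$, and then invoke the two defining axioms of an nc-function in turn.

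First I would introduce the $2n$-by-$2n$ invertible matrix
\[
s \= \begin{bmatrix} I_n & v \\ 0 & I_n \end{bmatrix}, \qquad s^{-1} \= \begin{bmatrix} I_n & -v \\ 0 & I_n \end{bmatrix}.
\]
A direct block multiplication shows, coordinate by coordinate, that
\[
s^{-1}(y \oplus x) s \= \begin{bmatrix} y & yv-vx \\ 0 & x \end{bmatrix}.
\]
Thus the matrix appearing in the hypothesis is the joint similarity of $y \oplus x$ by $s$.

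Next, since $\O$ is an nc-set, condition (i) in the definition gives $y \oplus x \in \O \cap \M_{2n}^d$, while the hypothesis places the conjugated matrix $s^{-1}(y \oplus x)s$ in $\O \cap \M_{2n}^d$ as well. The similarity axiom for nc-functions therefore applies and yields
\[
f\!\left(s^{-1}(y \oplus x) s\right) \= s^{-1} f(y \oplus x)\, s,
\]
and the direct-sum axiom gives $f(y \oplus x) = f(y) \oplus f(x)$. Combining,
\[
f\!\left(\begin{bmatrix} y & yv-vx \\ 0 & x \end{bmatrix}\right) \= s^{-1}\bigl(f(y) \oplus f(x)\bigr)\, s,
\]
and the same block-multiplication identity as above, now applied with $f(y), f(x)$ in place of $y, x$, produces exactly the right-hand side of \eqref{2.160}.

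There is no genuine obstacle, since everything reduces to a similarity computation; the only point requiring care is checking that the similarity axiom is legitimately applicable, i.e.\ that both $y \oplus x$ and its $s$-conjugate lie in $\O$, and this is exactly what the nc-set property together with the hypothesis guarantee.
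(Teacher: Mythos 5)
Your proof is correct and follows essentially the same route as the paper's: conjugating $y \oplus x$ by $s = \begin{bmatrix} I_n & v \\ 0 & I_n \end{bmatrix}$ and then applying the similarity and direct-sum axioms of an nc-function. Your extra remark that the nc-set axiom guarantees $y \oplus x \in \O$ is a point the paper leaves implicit, but it is a welcome bit of care rather than a divergence.
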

\begin{proof}
Let
$$s = \begin{bmatrix}\idcn&v\\0&\idcn\end{bmatrix}$$
so that
$$\begin{bmatrix}y&yv-vx\\ 0&x\end{bmatrix} = s^{-1}\begin{bmatrix}y&0\\ 0&x\end{bmatrix}s.$$
Since $f$ is nc,
\beq
f(\begin{bmatrix}y&yv-vx\\ 0&x\end{bmatrix}) 
&\ =\ & f(s^{-1}(y \oplus x)s)  \\ 
&=& s^{-1}(f(y) \oplus f(x))s \\ 
&=& \begin{bmatrix}\idcn &-v \\0&\idcn \end{bmatrix}
\begin{bmatrix}f(y)&0\\ 0&f(x)\end{bmatrix} \begin{bmatrix}\idcn &v \\0&\idcn \end{bmatrix}\\ 
&=&\begin{bmatrix}f(y)&f(y)v-vf(x)\\ 0&f(x)\end{bmatrix}.
\eeq
\end{proof}
{\sc Proof of Theorem~\ref{thmhkm}.}
First, we show that local boundedness implies continuity.
Fix $x \in \O \cap \mn^d$ and let $\varepsilon > 0$. Choose $r>0$ so that the ball 
\[
B({\begin{bmatrix}x&0\\0&x\end{bmatrix}},{r} )\subseteq \O \cap \M_{2n}^d.
\]
If  $r_1$ is chosen with $0<r_1<r$ then as $B({x\oplus x},{r_1})^-$ is a compact subset
 of $\O \cap \M_{2n}^d$ and $f$ is assumed locally bounded, there exists a constant $C$ such that
\be
\label{3.30}
z\in B({\begin{bmatrix}x&0\\0&x\end{bmatrix}},{r_1})  \implies \|f(z) \| < C.
\ee
Choose $\delta$ sufficiently small so that $\delta< \min\{r_1\varepsilon/2C,r_1/2\}$ 
and $B({x},{\delta} )\subseteq \O$. That $f$ is continuous at $x$ follows from the following claim.
\be\label{3.40}
y \in B({x},{\delta}) \implies f(y) \in B({f(x)},{\varepsilon}).
\ee
To prove the claim fix $y \in \mn^d$ with $\norm{y-x} < \delta$. Then $\norm{y-x} <r_1/2$ and $\norm{(C/\varepsilon)(y-x)} <r_1/2$. Hence by the triangle inequality,
\[
\norm{\begin{bmatrix}y&c(y-x)\\ 0&x\end{bmatrix}-\begin{bmatrix}x&0\\ 0&x\end{bmatrix} }< r_1.
\]
Hence, by \eqref{3.30},
\[
\norm{f(\begin{bmatrix}y&(C/\varepsilon)(y-x)\\ 0&x\end{bmatrix})}< C.
\]
But $x$, $y$, and $\begin{bmatrix}y&c(y-x)\\ 0&x\end{bmatrix}$ are in $\O$, so by Lemma \ref{lem2.20},
\[
f(\begin{bmatrix}y&(C/\varepsilon)(y-x)\\ 0&x\end{bmatrix})=\begin{bmatrix}f(y)&(C/\varepsilon)(f(y)-f(x))\\ 0&f(x)\end{bmatrix}
\]
In particular, we see that $\norm{(C/\varepsilon)(f(y)-f(x))} < C$, or equivalently, $f(y) \in B({f(x)},{\varepsilon})$. This proves \eqref{3.40}.

To see that $f$ is holomorphic, fix $x\in \O \cap \mn^d$. If $h \in \mn^d$ is selected sufficiently small, then
\[
\begin{bmatrix}x + \lambda h & h \\0&x\end{bmatrix} \in \O \cap \M_{2n}^d 
\]
for all sufficiently small $\lambda \in \C$. But
\[
\begin{bmatrix}x + \lambda h & h \\0&x\end{bmatrix}=\begin{bmatrix}x + \lambda h & (1/\lambda)\big((x+\lambda h)-x\big) \\0&x\end{bmatrix}.
\]
Hence, Lemma \ref{lem2.20} implies that
\be\label{3.50}
f(\begin{bmatrix}x + \lambda h & h \\0&x\end{bmatrix}) =
\begin{bmatrix}f(x + \lambda h) & (1/\lambda)\big(f(x+\lambda h)-f(x)\big) \\0&f(x)\end{bmatrix}.
\ee
As the left hand side of \eqref{3.50} is continuous at $\lambda = 0$, it follows that the 1-2 entry of the right hand side of \eqref{3.50} must converge. As $h$ is arbitrary after scaling, this implies that $f$ is holomorphic.
\ep

\section{$\tau$-holomorphic functions}

The theory of holomorphic functions on infinite dimensional spaces depends heavily on the topologies chosen --- 
see \eg \cite{din99}. For nc-functions too, the topology matters.
\bd
A topology $\tau$ on $\md$ is called admissible if it has a basis of bounded nc sets.
\ed
\bd
Let   $\tau$ be an admissible topology. A $\tau$-holomorphic function is an nc function on a $\tau$ open set that
is $\tau$ locally bounded.
\ed
By Theorem~\ref{thmhkm}, every $\tau$-holomorphic function on $\Omega$ is, at each level $n$, an analytic function from
$\O \cap \mnd$ to $\mn$.

\exam The {\em fine topology} is the admissible topology generated by all nc-sets. 
Since this is the largest admissible topology, for any admissible topology $\tau$,
any $\tau$-holomorphic function is automatically 
fine holomorphic. 
The class of nc functions considered in
\cite{hkm11b,pas14} is the 
fine holomorphic functions.

\vs
J. Pascoe proved the following inverse function theorem in \cite{pas14}.
The equivalence of (i) and (iii) is due to Helton, Klep and McCullough \cite{hkm11b}.

\bt
\label{thmb3}

Let  $\Omega \subseteq \md$ be an nc domain.
Let $\Phi$ be a fine holomorphic map on $\Omega$. Then the following are equivalent:

(i)  $\Phi$ is injective on $\Omega$.

(ii)
$D\Phi(a)$ is non-singular for every $a \in \Omega$.

(iii) The function $\Phi^{-1}$ exists and is  a  fine holomorphic  map.
\et

\exam
Let $\R^+ = \set{r \in \R}{r >0}$. For $n \in \N$, $a  \in \M_n^d$, and $r \in \R^+$, we let $D_n(a,r) \subseteq \M_n^d$ be the matrix polydisc defined by
\[
D_n(a,r) = \set{x \in \M_n^d}{\max_{1 \le i \le d}\|x_i -a_i \|  <r}.
\]
If $a \in \mnd$, $r \in \R^+$, we define $D(a,r) \subseteq \M^d$ by 
\[
D(a,r) = \bigcup_{k=1}^\infty D_{kn}(a^{(k)},r),
\]
where $a^{(k)}$ denotes the direct sum of $k$ copies of $a$. Finally, if $a \in \M^d$, $r \in \R^+$, we define $F(a,r) \subseteq \M^d$ by
\be\label{30}
F(a,r) = \bigcup_{m=1}^\infty\ \bigcup_{\ u \in \U_m} u^{-1} \big(D(a,r) \cap \M_m^d \big)\ u,
\ee
where $\U_m$ denotes the set of $m \times m$ unitary matrices.
It can be shown \cite{amif16} that the sets $F(a,r)$ are nc sets that form the basis for a topology.
We call this topology the {\em fat topology}.

There is an implicit function theorem for both the fine and fat topologies. The hypotheses require the
derivative to be full rank on a neighborhood of the point; the advantage of the fat topology is that if the derivative
is full rank at one point, it is automatically full rank on a neighborhood \cite[Thm. 5.5]{amif16}.

\bt
\label{thmc1}
Let $\Omega$ an nc domain.
Let $f = (f_1, \dots, f_k)^t$ be a vector of $k$ 
fine holomorphic function on $\Omega$, for some $1 \leq k \leq d-1$
(equivalently, a $\L(\C,\C^k)$ valued fine holomorphic function).
Suppose
\begin{eqnarray}
\nonumber
\lefteqn{
\forall\, n \in \N,\
\forall \, a \in \Omega \cap \mnd, 
}
 & \\
&\forall \, h 
\in \mn^k \setminus \{ 0 \}, \quad
Df(a) [(0,\dots,0,h^{d-k+1},\dots,h^d)] \neq 0.
\label{eqc1}
\end{eqnarray}
Let $W$ be the projection onto the first $d-k$ coordinates of $Z_f \cap \Omega$.
Then there is an $\L(\C,\C^k)$-valued fine holomorphic  function $g$ on $W$ such that
\[
Z_f \cap \Omega \=
\{ (y,g(y)) \, : \, y \in W \} .
\]

Moreover, if $f$ is fat holomorphic, then $g$ can also be taken to be fat holomorphic.
\et

\exam
The third example of an admissible topology is the 
 {\em free topology}.
A {\em basic free open set } in $\md$ is a set of the form
\[
\gdel \= 
\{ x \in \md : \| \d(x) \| < 1 \},
\]
where $\delta$ is a $J$-by-$J$ matrix with entries in $\pd$.
We define the free topology to be the topology on $\md$ which has as a basis all the sets $\gdel$,
as $J$ ranges over the positive integers, and the entries of $\delta$ range over all polynomials in $\pd$.
(Notice that $ G_{\delta_1} \cap G_{\delta_2} = G_{\delta_1 \oplus \delta_2}$, so these sets do form the basis of a topology). The free topology is a natural topology  when considering semi-algebraic sets.

\exam
Another admissible
 topology is the {\em  Zariski free topology}, which is generated by sets
of the form
\[
\{ x \in \gdel : f_i(x) \neq 0 \, \forall \, i \} , 
\]
where $\{ f_i \} $ 
are free holomorphic functions on $\gdel$.

There is no Goldilocks topology. The free topology has good polynomial approximation properties - 
not only is every free holomorphic function pointwise approximable by free polynomials, there is an Oka-Weil theorem
which says that on sets of the form $\gdel$, bounded free holomorphic functions are uniformly approximable on compact sets by free polynomials \cite{amfree}.
In contrast, neither the fine nor fat topology admit even pointwise polynomial approximation. Indeed,
one cannot have an admissible topology for which there is both an implicit function theorem and pointwise approximation by polynomials, as the following example shows (a formal statement is in \cite{amif16}).

\exam Let 
 $x_0 \in \M_2^2$ be 
\[
x_0 \= 
\left[
\begin{pmatrix} 
0&1\\
0&0
\end{pmatrix} ,
\
\begin{pmatrix} 
1&0\\
0&0
\end{pmatrix}  \right],
\]
and let $z_0 \in \M^2 $ be
\[
z_0 \= 
\begin{pmatrix} 
0&0\\
1&0
\end{pmatrix}.
\]

%

Define $p$ by
\be
\label{eqr13}
p(X,Y,Z) \= (Z)^2 + XZ + ZX +YZ - I.
\ee
If $x_0 = (X,Y)$ and $z_0 = Z$ are substituted in \eqref{eqr13},
we get $p(x_0,z_0) = 0$.
Let 
\be
\label{eqr14}
a \= 
\left[
\begin{pmatrix} 
0&1\\
0&0
\end{pmatrix} ,
\
\begin{pmatrix} 
1&0\\
0&0
\end{pmatrix}
\
\begin{pmatrix} 
0&0\\
1&0
\end{pmatrix}
\right],
\ee
We can calculate
\be
\label{eqr16}
\frac{\partial}{\partial Z} p(a) [h] \=
\begin{pmatrix} 
h_{11} + h_{12} + h_{21} & h_{11} + h_{12} + h_{22} \\
h_{11} + h_{22}   &    h_{12} + h_{21}
\end{pmatrix} .
\ee

It is immediate from \eqref{eqr16} that $ \frac{\partial}{\partial Z} p(a): \M_2 \to \M_2$ is onto, and so has a 
right inverse. 
By \cite[Thm 5.5]{amif16},
there is a fat domain $\Omega \ni a$
such that $\frac{\partial}{\partial Z} p(\lambda)$ is non-singular for all $\lambda \in \Omega$.
So by the Implicit Function Theorem \ref{thmc1},
if $W$ is the projection onto the first two coordinates
of $\Omega$, then $W$ is a fat domain containing $x_0$
and there is a fat holomorphic function $g$ defined on $W$ 
such that $g(x_0) =z_0$.
Since $z_0$ is not in the algebra generated by $x_0$, we cannot approximate $g$ pointwise by
free polynomials.

\section{Free holomorphic functions}
\label{secfree}

We shall let $H^\i(\gdel)$ denote the bounded free holomorphic functions on the polynomial polyhedron $\gdel$.
Just as in the commutative case \cite{at03, babo04}, there is a realization formula for these functions.
\bt \label{thmq1}
\cite[Thm. 8.1]{amfree}
Let $\delta$ be an $I$-by-$J$ matrix of free polynomials, and let $f$ be an nc-function on $\gdel$ that is bounded by $1$.
There exists an auxiliary Hilbert space $\L$ and an isometry
\[
  \begin{bmatrix}\alpha&B\\C&D\end{bmatrix} \ : \C \oplus \L^{I} \to \C \oplus \L^{J}
\]
so that for $x \in \gdel \cap B(\K)^d$, 
\be
\label{eqa44}
f(x) \= \alpha I_\K  +  (I_\K \otimes B) (\d(x) \ot I_\L)  \big[ I_\K \otimes I_{\L^J} - (I_\K \otimes D)  (\d(x) \ot I_\L) \big]^{-1}
(I_\K \otimes C).
\ee
\et
An immediate corollary is that if one truncates the Neumann series of $ \big[ I_\K \otimes I_{\L^J} - (I_\K \otimes D)  (\d(x) \ot I_\L) \big]^{-1}$ one gets a sequence of free polynomials that converges to $f$ uniformly on $G_{t \delta}$ 
for every $t > 1$.

\vs
The space $\K$ that appears in \eqref{eqa44} is finite dimensional, but the formula would make sense even it were not.
Let us fix an infinite dimensional separable Hilbert space $\h$.
Let us define $\gds$ by
\[
\gds \ := \ 
\{ x \in \lhd : \| \d (x) \| < 1 \} .
\]
If one extends formula \eqref{eqa44} to $\gds$, what sort of functions does one get?

\bd
\label{defa2}
Let $\Omega \subseteq \lhd$, and let $F: \Omega \to \lh$.
We say that $F$ is intertwining preserving (IP) if:

(i) Whenever $x,y \in \Omega$ and there exists
some bounded linear operator $T \in \lh$ such that $ T x = y T$, then $T F(x) = F(y) T$.

(ii) Whenever $( x_ n )$ is a bounded sequence  in $ \Omega$,  and there exists
some invertible bounded linear operator $ s :  \h \to  \oplus \h $ such that
\[
s^{-1} \begin{bmatrix} x_1 & 0 & \cdots\\
0 & x_2& \cdots\\
\cdots&\cdots&\ddots\end{bmatrix} s \in \Omega,
\] then
\[
F( s^{-1} \begin{bmatrix} x_1 & 0 & \cdots \\
0 & x_2 & \cdots \\
\cdots&\cdots&\ddots\end{bmatrix} s ) \=
s^{-1} \begin{bmatrix} F(x_1) & 0 & \cdots\\
0 & F(x_2) & \cdots \\
\cdots&\cdots&\ddots
  \end{bmatrix} s.
\]
\ed
\bd
\label{defa3}
Let $F: \bhd \to \bh$. We say $F$ is sequentially strong operator continuous (SSOC) if,
whenever $x_n \to x$ in the strong operator topology on $\bhd$, then
$F(x_n)$ tends to $F(x)$ in the strong operator topology on $\bh$.
\ed

Since multiplication is sequentially strong operator continuous, it follows that every free
polynomial is SSOC, and  this  property is also inherited by  limits on  sets that are closed w.r.t. direct sums.
The following theorem from \cite{amip15} characterizes what functions arise in \eqref{eqa44}:
\bt
\label{thma1}
 Assume that 
$\gds$ is connected and contains $0$. Let $F : \gds \to \lho$ be sequentially strong operator continuous. Then the following are equivalent:

(i) The function $F$ is intertwining preserving.

(ii) For each $t > 1$, the function $F$ is uniformly approximable by free polynomials on
$G_{t \d}^\sharp$.


(iii) There exists an auxiliary Hilbert space $\L$ and an isometry
\[
  \begin{bmatrix}\alpha&B\\C&D\end{bmatrix} \ : \C \oplus \L^{I} \to \C \oplus \L^{J}
\]
so that for $x \in \gds$, 
\be
\label{eqax4}
F(x) \= \alpha I_\h  +  (I_\h \otimes B) (\d(x) \ot I_\L)  \big[ I_\h \otimes I_{\L^J} - (I_\h \otimes D)  (\d(x) \ot I_\L) \big]^{-1}
(I_\h \otimes C).
\ee
\et

It could be the case that a free holomorphic function on $\gdel$ could have more than one extension to $\gds$. For example, if $\delta(x^1,x^2) = x^1 x^2 - x^2x^1 - 1 $, then $\gdel$ is empty, but $\gds$
is not.
The following theorem is from \cite{amip15}.
\bt
Assume that 
$\gds$ is connected and contains $0$. Then every bounded free holomorphic function on
$\gdel$ has a unique extension to an IP SSOC function on $\gds$.
\et

What about applying free holomorphic functions to $d$-tuples of operators on a Banach space, $\L(X)$?
This would allow the development of a non-commutative functional calculus. This is considered in \cite{amncf}.
The basic idea is to take a reasonable cross-norm $h$ defined on $X \otimes \K$ for every Hilbert space $\K$.
Let $(T) = (T_{ij})$ be an $I$-by-$J$ matrix of elements of $\L(X)$. Let $E_{ij} : \C^J \to \C^I$ be the matrix with $1$ in the $(i,j)$ entry and zero elsewhere.
Then we get a norm on $(T)$ by
\be
\label{eqnh}
\| (T) \|_h \ := \ \sup_{\K} \| \sum_{i,j} T_{ij} \otimes_h ( E_{ij} \otimes \id_\K) \|
\ee
where the $\ot$ without a subscript means the Hilbert space tensor product.
We define 
\[
\| (T) \|_\bullet \ := \ \inf_h \| (T) \|_h .
\]
Then provided $\| \delta(T) \|_\bullet < 1$, we can evaluate $f(T)$, when $f $ is in $H^\i(\gdel)$, by  a formula
like \eqref{eqa44} (or \eqref{eqax4}). Since tensor products need not be associative, we do all the Hilbert space tensor products first, and then tensor with the $\L(X)$ operator. Here is one theorem from \cite{amncf}.
\bt
\label{thmd01}
Suppose $\d(0) = 0$, and that
$T \in \L(X)^d$ has 
\[
\sup_{0 \leq r \leq 1} \| \d(r T) \|_\bullet < 1 .
\]
Then there exists a unique completely bounded homomorphism $\pi : H^\i(\gdel) \to \L(X)$
that extends the evaluation map on free polynomials $p \mapsto p(T)$.
\et

\bibliography{references}

\end{document}